\def\BibTeX{{\rm B\kern-.05em{\sc i\kern-.025em b}\kern-.08em
    T\kern-.1667em\lower.7ex\hbox{E}\kern-.125emX}}
\newtheorem{thm}{Theorem}[section]
\newtheorem{lem}[thm]{Lemma}
\newtheorem{cond}[thm]{Condition}
\theoremstyle{definition}
\theoremstyle{remark}
\newtheorem{rem}{Remark}[section]
\numberwithin{equation}{section}
    \newcommand{\floor}[1]{\lfloor#1\rfloor}
    \newcommand{\EE}{\mathbb{E}}
    \newcommand{\Exp}{\operatorname{E}}
    \newcommand{\E}{\Exp}
    \renewcommand{\Pr}{\operatorname{P}}
    \newcommand{\dto}{\xrightarrow{d}}
    \newcommand{\wto}{\xrightarrow{w}}
    \newcommand{\vto}{\xrightarrow{v}}
    \newcommand{\fidi}{\xrightarrow{\text{fidi}}}
    \newcommand{\rmd}{\mathrm{d}}
\newcommand{\be}{\begin{equation}}
    \newcommand{\ee}{\end{equation}}
\begin{document}

\title[Weak convergence of self-normalized partial sum processes] 
{A functional limit theorem for self-normalized partial sum processes in the $M_{1}$ topology}

%

\author{Danijel Krizmani\'{c}}

\address{Danijel Krizmani\'{c}\\ Faculty of Mathematics\\
        University of Rijeka\\
        Radmile Matej\v{c}i\'{c} 2, 51000 Rijeka\\
        Croatia}
\email{dkrizmanic@math.uniri.hr}





\begin{abstract}
For a stationary sequence of random variables we derive a self-normalized functional limit theorem under joint regular variation with index $\alpha \in (0,2)$ and weak dependence conditions. The convergence takes place in the space of real-valued c\`{a}dl\`{a}g functions on
$[0,1]$ with the Skorokhod $M_{1}$ topology.
\end{abstract}

\maketitle

\section{Introduction}
\label{intro}

For a stationary sequence of random variables $(X_{n})$ we study self-normalized partial sum processes
$\{ \zeta_{n}^{-1}\sum_{i=1}^{\floor {nt}}X_{i},\,t \in [0,1] \}$
as random elements of the space $D([0,1], \mathbb{R})$ of real valued c\`adl\`ag functions on $[0, 1]$,
where $\zeta_{n}^{2}=X_{1}^{2} + \ldots + X_{n}^{2}$. These processes arise naturally in the study of stochastic integrals, martingale inequalities and various statistical studies. Central limit theorems for partial sums $X_{1}+ \ldots + X_{n}$, $n \in \mathbb{N}$, and corresponding functional limit theorems are often derived using self-normalization techniques. Functional limit theorems (or invariance principles) play an important role in statistics and econometrics, and they have been studied by many authors in the literature. Among them we mention only Cs\"{o}rg\H{o} et al.~\cite{CSW03} for the case of independent random variables, Balan and Kulik~\cite{BaKu09} for $\phi$--mixing sequences, and Ra\v{c}kauskas and Suquet~\cite{RaSu11} for linear processes. For a survey on self-normalized limit theorems we refer to Shao and Wang~\cite{ShWa13}.

In this paper, we derive a functional limit theorem for the self-normalized partial sum process of stationary, regularly varying sequence of dependent random variables $(X_{n})$ with index $\alpha \in (0,2)$ for which clusters of high-treshold excesses can be broken down into asymptotically independent blocks:
\begin{equation}\label{e:convintro}
  \frac{1}{\zeta_{n}}  \sum_{i=1}^{\floor {n\,\cdot}}(X_{i}-c_{n}) \dto \frac{V(\,\cdot\,)}{W(1)} \qquad \textrm{as} \ n \to \infty,
\end{equation}
in $D([0,1], \mathbb{R})$ equipped with the Skorokhod $M_{1}$ topology under the condition
that all extremes within one such cluster have the same sign. Here $c_{n}$ are appropriate centering constants, $V$ and $W$ are stable L\'{e}vy process, and $\floor{x}$ represents the greatest integer not larger than $x$. In order to prove relation (\ref{e:convintro}) we will first establish functional convergence of the joint stochastic process
$$ L_{n}(t) = \bigg( \frac{1}{a_{n}}  \sum_{i=1}^{\floor {nt}}(X_{i}-c_{n}), \frac{1}{a_{n}^{2}}  \sum_{i=1}^{\floor {nt}}X_{i}^{2} \bigg), \qquad t \in [0,1]$$
in the space of $\mathbb{R}^{2}$--valued c\`{a}dl\`{a}g functions on
$[0,1]$ with the weak Skorokhod $M_{1}$ topology, and then we will apply the continuous mapping theorem to obtain (\ref{e:convintro}).

The paper is organized as follows. Section~\ref{S:Prel} is devoted to basic notions and results on regular variation, point processes and Skorokhod $M_{1}$ topology. In Section~\ref{S:jointconv2} we establish functional convergence of the joint stochastic process $L_{n}(\,\cdot\,)$,
and then in Section~\ref{S:Selfnormalized} we derive functional convergence of the self-normalized partial sum process.

\section{Preliminaries}\label{S:Prel}

\subsection{Regular variation}

Let $\EE^{d}=[-\infty, \infty]^{d} \setminus \{ 0 \}$. We equip
$\EE^{d}$ with the topology in which a set $B \subset \EE^{d}$
has compact closure if and only if it is bounded away from zero,
that is, if there exists $u > 0$ such that $B \subset \EE^{d}_u = \{ x
\in \EE^{d} : \|x\| >u \}$. Here $\| \cdot \|$ denotes the max-norm on $\mathbb{R}^{d}$, i.e.\
$\displaystyle \| x \|=\max \{ |x_{i}| : i=1, \ldots , d\}$ where
$x=(x_{1}, \ldots, x_{d}) \in \mathbb{R}^{d}$. Denote by $C_{K}^{+}(\EE^{d})$ the class of all
nonnegative, continuous functions on $\EE^{d}$ with compact support.

We say that a stationary process $(X_{n})_{n \in \mathbb{Z}}$ is \emph{(jointly) regularly varying} with index
$\alpha \in (0,\infty)$ if for any nonnegative integer $k$ the
$kd$-dimensional random vector $X = (X_{1}, \ldots , X_{k})$ is
multivariate regularly varying with index $\alpha$, i.e.\ there
exists a random vector $\Theta$ on the unit sphere
$\mathbb{S}^{kd-1} = \{ x \in \mathbb{R}^{kd} : \|x\|=1 \}$ such
that for every $u \in (0,\infty)$ and as $x \to \infty$,
 \begin{equation}\label{e:regvar1}
   \frac{\Pr(\|X\| > ux,\,X / \| X \| \in \cdot \, )}{\Pr(\| X \| >x)}
    \wto u^{-\alpha} \Pr( \Theta \in \cdot \,),
 \end{equation}
the arrow ''$\wto$'' denoting weak convergence of finite measures.
Univariate regular variation can be expressed in terms of vague convergence of
measures on $\EE = \EE^{1}$:
\begin{equation}
  \label{e:onedimregvar}
  n \Pr( a_n^{-1} X_i \in \cdot \, ) \vto \mu( \, \cdot \,),
\end{equation}
where $(a_{n})$ is a sequence of positive real numbers such that
\begin{equation}\label{e:niz}
 n \Pr ( |X_{1}| > a_{n}) \to 1
\end{equation}
as $n \to \infty$, and $\mu$ is a nonzero Radon measure on $\EE$ given by
\begin{equation}\label{e:mu}
  \mu(\rmd x) = \bigl( p \, 1_{(0, \infty)}(x) + q \, 1_{(-\infty, 0)}(x) \bigr) \, \alpha |x|^{-\alpha-1}\,\rmd x,
\end{equation}
for some $p \in [0,1]$, with $q=1-p$.

Theorem 2.1 in Basrak and Segers~\cite{BaSe} provides a convenient
characterization of joint regular variation:~it is necessary and
sufficient that there exists a process $(Y_n)_{n \in \mathbb{Z}}$
with $\Pr(|Y_0| > y) = y^{-\alpha}$ for $y \geq 1$ such that, as $x
\to \infty$,
\begin{equation}\label{e:tailprocess}
  \bigl( (x^{-1}\ X_n)_{n \in \mathbb{Z}} \, \big| \, | X_0| > x \bigr)
  \fidi (Y_n)_{n \in \mathbb{Z}},
\end{equation}
where "$\fidi$" denotes convergence of finite-dimensional
distributions. The process $(Y_{n})$ is called
the \emph{tail process} of $(X_{n})$.

\subsection{Point processes and dependence conditions}\label{s:pp}

Let $(X_{n})$ be a stationary sequence of random variables and assume it is jointly regularly varying with index $\alpha >0$. Let $(Y_{n})$ be
the tail process of $(X_{n})$. Let
\begin{equation*}\label{E:ppspacetime}
 N_{n} = \sum_{i=1}^{n} \delta_{(i / n,\,X_{i} / a_{n})} \qquad \textrm{for all} \ n\in \mathbb{N},
\end{equation*}
with $a_{n}$ as in (\ref{e:niz}). The point process convergence for the sequence $(N_{n})$ was already established by Basrak et al.~\cite{BKS} on the space $[0,1] \times \EE_{u}$ for any threshold $u>0$, with the limit depending on that threshold. A more useful convergence result for $N_{n}$ without the restrictions to various domains was obtained by Basrak and Tafro~\cite{BaTa16}. The appropriate weak dependence conditions for this convergence result are given below.

\begin{cond}\label{c:mixcond1}
There exists a sequence of positive integers $(r_{n})$ such that $r_{n} \to \infty $ and $r_{n} / n \to 0$ as $n \to \infty$ and such that for every $f \in C_{K}^{+}([0,1] \times \mathbb{E})$, denoting $k_{n} = \lfloor n / r_{n} \rfloor$, as $n \to \infty$,
\begin{equation}\label{e:mixcon}
 \E \biggl[ \exp \biggl\{ - \sum_{i=1}^{n} f \biggl(\frac{i}{n}, \frac{X_{i}}{a_{n}}
 \biggr) \biggr\} \biggr]
 - \prod_{k=1}^{k_{n}} \E \biggl[ \exp \biggl\{ - \sum_{i=1}^{r_{n}} f \biggl(\frac{kr_{n}}{n}, \frac{X_{i}}{a_{n}} \biggr) \biggr\} \biggr] \to 0.
\end{equation}
\end{cond}

\begin{cond}\label{c:mixcond2}
There exists a sequence of positive integers $(r_{n})$ such that $r_{n} \to \infty $ and $r_{n} / n \to 0$ as $n \to \infty$ and such that for every $u > 0$,
\begin{equation}
\label{e:anticluster}
  \lim_{m \to \infty} \limsup_{n \to \infty}
  \Pr \biggl( \max_{m \leq |i| \leq r_{n}} | X_{i} | > ua_{n}\,\bigg|\,| X_{0}|>ua_{n} \biggr) = 0.
\end{equation}
\end{cond}
Condition~\ref{c:mixcond1} is implied by the strong mixing property (see Krizmani\'{c}~\cite{Kr16}), with the sequence $(\xi_{n})$ being strongly mixing if $\alpha (n) \to 0$ as $n \to \infty$, where
$$\alpha (n) = \sup \{|\Pr (A \cap B) - \Pr(A) \Pr(B)| : A \in \mathcal{F}_{-\infty}^{0}, B \in \mathcal{F}_{n}^{\infty} \}$$
and $\mathcal{F}_{k}^{l} = \sigma( \{ \xi_{i} : k \leq i \leq l \} )$ for $-\infty \leq k \leq l \leq \infty$.
There are many time series satisfying these conditions, including moving averages, stochastic volatility and GARCH models (see for example Section 4 in Basrak et al.~\cite{BKS}).
By Proposition 4.2 in Basrak and Segers~\cite{BaSe},
under Condition~\ref{c:mixcond2} the following
holds
\begin{eqnarray}\label{E:theta:spectral}
   \theta := \Pr ({\textstyle\sup_{i\ge 1}} | Y_{i}| \le 1) = \Pr ({\textstyle\sup_{i\le -1}} | Y_{i}| \le 1)>0,
\end{eqnarray}
and $\theta$ is the extremal index of the univariate sequence $(| X_{n} |)$.
For a detailed discussion on joint regular variation and dependence Conditions~\ref{c:mixcond1} and \ref{c:mixcond2} we refer to Section 3.4 in Basrak et al.~\cite{BKS}.

Under joint regular variation and Conditions~\ref{c:mixcond1} and \ref{c:mixcond2}, by Theorem 3.1 in Basrak and Tafro~\cite{BaTa16}, as $n \to \infty$,
\begin{equation}\label{e:BaTa}
N_{n} \dto N = \sum_{i}\sum_{j}\delta_{(T_{i}, P_{i}\eta_{ij})}
\end{equation}
in $[0,1] \times \EE$, where $\sum_{i=1}^{\infty}\delta_{(T_{i}, P_{i})}$ is a Poisson process on $[0,1] \times (0,\infty)$
with intensity measure $Leb \times \nu$ where $\nu(\rmd x) = \theta \alpha
x^{-\alpha-1}1_{(0,\infty)}(x)\,\rmd x$, and $(\sum_{j= 1}^{\infty}\delta_{\eta_{ij}})_{i}$ is an i.i.d.~sequence of point processes in $\EE$ independent of $\sum_{i}\delta_{(T_{i}, P_{i})}$ and with common distribution equal to the distribution of $\sum_{j}\delta_{Z_{j}/L_{Z}}$, where $L_{Z}= \sup_{j \in \mathbb{Z}}|Z_{j}|$ and $\sum_{j}\delta_{Z_{j}}$ is distributed as $( \sum_{j \in \mathbb{Z}} \delta_{Y_j} \,|\, \sup_{i \le -1} | Y_i| \le 1).$

\subsection{The weak and strong $M_{1}$ topologies}\label{ss:j1m1}

 We start with the definition of the Skorokhod weak $M_{1}$ topology in a general space $D([0,1], \mathbb{R}^{d})$ of $\mathbb{R}^{d}$--valued c\`{a}dl\`{a}g functions on $[0,1]$. For $x \in D([0,1],
\mathbb{R}^{d})$ the completed (thick) graph of $x$ is the set
\[
  G_{x}
  = \{ (t,z) \in [0,1] \times \mathbb{R}^{d} : z \in [[x(t-), x(t)]]\},
\]
where $x(t-)$ is the left limit of $x$ at $t$ and $[[a,b]]$ is the product segment, i.e.
$[[a,b]]=[a_{1},b_{1}] \times [a_{2},b_{2}] \ldots \times [a_{d},b_{d}]$
for $a=(a_{1}, a_{2}, \ldots, a_{d}), b=(b_{1}, b_{2}, \ldots, b_{d}) \in
\mathbb{R}^{d}$.
 We define an
order on the graph $G_{x}$ by saying that $(t_{1},z_{1}) \le
(t_{2},z_{2})$ if either (i) $t_{1} < t_{2}$ or (ii) $t_{1} = t_{2}$
and $|x_{j}(t_{1}-) - z_{1j}| \le |x_{j}(t_{2}-) - z_{2j}|$
for all $j=1, 2, \ldots, d$. The relation $\le$ induces only a partial
order on the graph $G_{x}$. A weak parametric representation
of the graph $G_{x}$ is a continuous nondecreasing function $(r,u)$
mapping $[0,1]$ into $G_{x}$, with $r \in C([0,1],[0,1])$ being the
time component and $u \in C([0,1],
\mathbb{R}^{d})$ being the spatial component, such that $r(0)=0,
r(1)=1$ and $u(1)=x(1)$. Let $\Pi_{w}(x)$ denote the set of weak
parametric representations of the graph $G_{x}$. For $x_{1},x_{2}
\in D([0,1], \mathbb{R}^{d})$ define
\[
  d_{w}(x_{1},x_{2})
  = \inf \{ \|r_{1}-r_{2}\|_{[0,1]} \vee \|u_{1}-u_{2}\|_{[0,1]} : (r_{i},u_{i}) \in \Pi_{w}(x_{i}), i=1,2 \},
\]
where $\|x\|_{[0,1]} = \sup \{ \|x(t)\| : t \in [0,1] \}$. Now we
say that $x_{n} \to x$ in $D([0,1], \mathbb{R}^{d})$ for a sequence
$(x_{n})$ in the weak Skorokhod $M_{1}$ (or shortly $WM_{1}$)
topology if $d_{w}(x_{n},x)\to 0$ as $n \to \infty$.

Now we define the strong $M_{1}$ topology. For $x \in D([0,1], \mathbb{R}^{d})$
let
\[
  \Gamma_{x}
  = \{ (t,z) \in [0,1] \times \mathbb{R}^{d} : z \in [x(t-), x(t)] \},
\]
where $[a,b] = \{  \lambda a + (1-\lambda)b : 0 \leq \lambda \leq 1 \}$ for $a, b \in \mathbb{R}^{d}$. We say $(r,u)$ is a strong parametric representation of $\Gamma_{x}$ if it is a continuous nondecreasing function mapping $[0,1]$ onto $\Gamma_{x}$. Denote by $\Pi(x)$ the set of all strong parametric representations of the completed (thin) graph $\Gamma_{x}$. Then for $x_{1},x_{2} \in D([0,1], \mathbb{R}^{d})$ put
\[
  d_{M_{1}}(x_{1},x_{2})
  = \inf \{ \|r_{1}-r_{2}\|_{[0,1]} \vee \|u_{1}-u_{2}\|_{[0,1]} : (r_{i},u_{i}) \in \Pi(x_{i}), i=1,2 \}.
\]
The distance function $d_{M_{1}}$ is a metric on $D([0,1], \mathbb{R}^{d})$, and the induced topology is called the (standard or strong) Skorokhod $M_{1}$ topology.
The $WM_{1}$
topology is weaker than the standard $M_{1}$ topology on $D([0,1],
\mathbb{R}^{d})$, but they coincide if $d=1$. The $WM_{1}$ topology
coincides with the topology induced by the metric
\begin{equation}\label{e:defdp}
 d_{p}(x_{1},x_{2})= \max \{ d_{M_{1}}(x_{1j},x_{2j}) : j=1,2, \ldots, d \}
\end{equation}
 for $x_{i}=(x_{i1}, x_{i2}, \ldots, x_{id}) \in D([0,1],
 \mathbb{R}^{d})$ and $i=1,2$. The metric $d_{p}$ induces the product topology on $D([0,1], \mathbb{R}^{d})$.
For detailed discussion of the strong and weak $M_{1}$ topologies we refer to sections 12.3--12.5 in Whitt~\cite{Whitt02}.

Denote by $C^{\uparrow}_{0}([0,1], \mathbb{R})$ the subset of functions $x$ in $D[0,1], \mathbb{R})$ that are continuous and nondecreasing with $x(0)>0$. It is a measurable subset of $D([0,1], \mathbb{R})$ with the Skorokhod $M_{1}$ topology, see Krizmani\'{c}~\cite{Kr23}.
The following auxiliary result which will be used in the last section deals with $M_{1}$ continuity of division of two c\`{a}dl\`{a}g functions.

\begin{lem}[Krizmani\'{c}~\cite{Kr23}]\label{l:M1div}
The function $h \colon D([0,1], \mathbb{R}) \times C^{\uparrow}_{0}([0,1], \mathbb{R}) \to D([0,1], \mathbb{R})$ defined by
$h(x,y)= x/y$
is continuous
when $D([0,1], \mathbb{R}) \times C^{\uparrow}_{0}([0,1], \mathbb{R})$ is endowed with the weak $M_{1}$ topology and $D([0,1], \mathbb{R})$ is endowed with the standard $M_{1}$ topology.
\end{lem}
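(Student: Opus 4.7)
The plan is to reduce the claim to a statement about uniform convergence of parametric representations. Since $d=1$, the weak and strong $M_{1}$ topologies on $D([0,1],\mathbb{R})$ coincide, and by the discussion around~\eqref{e:defdp} the weak $M_{1}$ topology on the product space is induced by
\[
  d_{p}((x_{1},y_{1}),(x_{2},y_{2})) = d_{M_{1}}(x_{1},x_{2}) \vee d_{M_{1}}(y_{1},y_{2}).
\]
So we may suppose $d_{M_{1}}(x_{n},x)\to 0$ and $d_{M_{1}}(y_{n},y)\to 0$ with $y\in C^{\uparrow}_{0}([0,1],\mathbb{R})$, and we must show $d_{M_{1}}(x_{n}/y_{n},x/y)\to 0$. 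Because $y$ is continuous, a standard property of $M_{1}$-convergence to a continuous limit (see Whitt~\cite{Whitt02}) yields $y_{n}\to y$ uniformly on $[0,1]$. Since $y$ is nondecreasing with $y(0)>0$, for large $n$ one has $y_{n}\ge c$ on $[0,1]$ for some $c>0$; in particular $x_{n}/y_{n}\in D([0,1],\mathbb{R})$.

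The key construction proceeds as follows. Choose strong parametric representations $(r_{n},u_{n})\in\Pi(x_{n})$ and $(r,u)\in\Pi(x)$ with $\|r_{n}-r\|_{[0,1]}\vee\|u_{n}-u\|_{[0,1]}\to 0$, and set
\[
  \tilde u_{n}(s) = \frac{u_{n}(s)}{y_{n}(r_{n}(s))}, \qquad \tilde u(s) = \frac{u(s)}{y(r(s))}.
\]
I claim $(r_{n},\tilde u_{n})\in\Pi(x_{n}/y_{n})$ and $(r,\tilde u)\in\Pi(x/y)$. Indeed, $u_{n}(s)$ lies on the segment $[x_{n}(r_{n}(s)-),x_{n}(r_{n}(s))]$; since $y_{n}$ is continuous, $y_{n}(r_{n}(s)-)=y_{n}(r_{n}(s))>0$, so dividing this segment pointwise by $y_{n}(r_{n}(s))$ produces exactly the segment whose endpoints are the left and right values of $x_{n}/y_{n}$ at $r_{n}(s)$, and $\tilde u_{n}(s)$ lies on it. Continuity and monotonicity of $r_{n}$, continuity of $\tilde u_{n}$, and the endpoint conditions $r_{n}(0)=0$, $r_{n}(1)=1$, $\tilde u_{n}(1)=(x_{n}/y_{n})(1)$ are then immediate.

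It remains to show $\tilde u_{n}\to\tilde u$ uniformly. By the triangle inequality,
\[
  |y_{n}(r_{n}(s))-y(r(s))| \leq \|y_{n}-y\|_{[0,1]} + |y(r_{n}(s))-y(r(s))|,
\]
and the right-hand side tends to $0$ uniformly in $s$ by uniform convergence $y_{n}\to y$ together with uniform continuity of $y$ applied to $r_{n}\to r$ uniformly. Because $y\circ r\ge y(0)>0$, the quotient map is uniformly Lipschitz on the relevant range, so $\tilde u_{n}\to\tilde u$ uniformly follows from $u_{n}\to u$ uniformly. Hence
\[
  d_{M_{1}}(x_{n}/y_{n},x/y) \leq \|r_{n}-r\|_{[0,1]}\vee\|\tilde u_{n}-\tilde u\|_{[0,1]} \to 0,
\]
which is the desired $M_{1}$ convergence.

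The main obstacle is verifying that $(r_{n},\tilde u_{n})$ is a bona fide strong parametric representation of $x_{n}/y_{n}$; this step depends critically on the continuity of $y_{n}$, which is precisely why the second coordinate of $h$ is restricted to $C^{\uparrow}_{0}$. Once this is in place, the remaining work is a short uniform-convergence argument made possible by the reduction $y_{n}\to y$ uniformly that follows from the continuity of the limit $y$.
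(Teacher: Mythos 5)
Your proof is correct; note that the paper itself does not prove this lemma but imports it from Krizmani\'{c}~\cite{Kr23}, and your argument --- reducing weak $M_{1}$ convergence on the product to coordinatewise $M_{1}$ convergence, upgrading $y_{n}\to y$ to uniform convergence via continuity of the limit, and then transporting parametric representations of $x_{n}$ to parametric representations of $x_{n}/y_{n}$ by dividing the spatial component by $y_{n}\circ r_{n}$ --- is exactly the standard route taken there. The only cosmetic imprecision is that one cannot in general fix a single representation $(r,u)\in\Pi(x)$ independent of $n$; one should take near-optimal pairs $(r_{n},u_{n})\in\Pi(x_{n})$ and $(\rho_{n},v_{n})\in\Pi(x)$ with $\|r_{n}-\rho_{n}\|_{[0,1]}\vee\|u_{n}-v_{n}\|_{[0,1]}\to 0$, after which your estimates go through verbatim.
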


For $a \in \mathbb{R}$ denote by $\widehat{a}$ the constant function in $D([0,1], \mathbb{R})$ defined by $\widehat{a}(t)=a$ for all $t \in [0,1]$. Let $D_{m}([0,1], \mathbb{R})$ be the subset of functions $x$ in $D([0,1], \mathbb{R})$ that are monotone with $x(0) \geq 0$. Similar to Corollary 1 in Krizmani\'{c}~\cite{Kr23} one can prove the following result.

\begin{lem}\label{l:M1contconst}
The function $f \colon D([0,1], \mathbb{R}) \times D_{m}([0,1] \times \mathbb{R}) \to D([0,1], \mathbb{R}) \times D_{m}([0,1], \mathbb{R}) $ defined by
$ f(x,y) = (x, \widehat{y(1)})$
is continuous when  $D([0,1], \mathbb{R}) \times D_{m}([0,1], \mathbb{R})$ is endowed with the weak $M_{1}$ topology.
\end{lem}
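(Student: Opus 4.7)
The plan is to exploit that the weak $M_{1}$ topology on $D([0,1],\RR) \times C_{m}([0,1],\RR)$ is precisely the product of two copies of the $M_{1}$ topology on $D([0,1],\RR)$, since in dimension one $WM_{1}$ and $M_{1}$ coincide and the metric $d_{p}$ in (\ref{e:defdp}) then equals the max of the two coordinate $M_{1}$ distances. Hence $(x_{n},y_{n}) \to (x,y)$ in this topology is equivalent to $x_{n} \to x$ and $y_{n} \to y$ separately in $M_{1}$. Because $f$ leaves the first coordinate alone, the task reduces to proving that whenever $y_{n} \to y$ in $M_{1}$ with $y,y_{n} \in C_{m}([0,1],\RR)$, the constant functions $\widehat{y_{n}(1)}$ converge to $\widehat{y(1)}$ in $M_{1}$.

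The crux is then to show $y_{n}(1) \to y(1)$. For this I would take strong parametric representations $(r_{n},u_{n}) \in \Pi(y_{n})$ and $(r,u) \in \Pi(y)$ achieving
\[
  \|r_{n}-r\|_{[0,1]} \vee \|u_{n}-u\|_{[0,1]} \to 0.
\]
The definition of $\Pi$ forces the boundary conditions $u_{n}(1) = y_{n}(1)$ and $u(1) = y(1)$, so
\[
  |y_{n}(1) - y(1)| = |u_{n}(1)-u(1)| \le \|u_{n}-u\|_{[0,1]} \to 0.
\]
(Alternatively, $y$ is continuous at $t=1$, and $M_{1}$ convergence is known to imply pointwise convergence at continuity points of the limit, which gives the same conclusion.)

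To finish, it suffices to observe that for real scalars $a_{n},a$ one has $d_{M_{1}}(\widehat{a_{n}}, \widehat{a}) \le |a_{n}-a|$: the pairs $(r,u_{n})(s)=(s,a_{n})$ and $(r,u)(s)=(s,a)$ are admissible strong parametric representations of $\widehat{a_{n}}$ and $\widehat{a}$ respectively, and the sup-norm distance between their spatial components is exactly $|a_{n}-a|$. Applying this with $a_{n}=y_{n}(1)$ and $a=y(1)$ yields $\widehat{y_{n}(1)} \to \widehat{y(1)}$ in $M_{1}$, which together with $x_{n} \to x$ gives $f(x_{n},y_{n}) \to f(x,y)$ in the weak $M_{1}$ topology on $D([0,1],\RR) \times C_{m}([0,1],\RR)$.

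There is no real obstacle here; the argument is a clean unpacking of definitions, and the only delicate point — convergence at the right endpoint $t=1$ — is built into $M_{1}$ parametric representations via the condition $u(1)=x(1)$, so it comes for free. This is the same mechanism that drives Corollary~1 in Krizmani\'{c}~\cite{Kr23}, and the present statement is the direct analogue with the roles of the factors as specified.
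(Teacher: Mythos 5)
Your proof is correct. The paper gives no argument for this lemma at all---it only remarks that it can be proved ``similar to Corollary 1 in Krizmani\'{c}~\cite{Kr23}''---and your direct unpacking (coordinatewise reduction via the product metric $d_{p}$, the endpoint condition $u(1)=y(1)$ forced on any strong parametric representation to get $y_{n}(1)\to y(1)$, and the elementary bound $d_{M_{1}}(\widehat{a_{n}},\widehat{a})\le |a_{n}-a|$) is precisely the mechanism behind that cited corollary, so this counts as essentially the same approach.
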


\section{Joint functional convergence of partial sum processes with $\alpha \in (0,2)$}
\label{S:jointconv2}

Let $(X_{n})$ be a stationary sequence of random variables, jointly regularly varying with index $\alpha \in (0,2)$, and assume Condition \ref{c:mixcond2} hold. In the proof of the following theorem we will use an auxiliary result from Basrak et al.~\cite{BaPlSo} which holds under $\beta$--mixing, and hence we assume the sequence $(X_{n})$ to be $\beta$--mixing. Since $\beta$--mixing implies strong mixing, Condition~\ref{c:mixcond1} automatically holds. For $n \in \mathbb{N}$ let
$$ L_{n}(t) =  \bigg( \sum_{k=1}^{\lfloor nt \rfloor} \frac{X_{k}}{a_{n}} - \lfloor nt \rfloor b_{n}, \sum_{k=1}^{\lfloor nt \rfloor}\frac{X_{k}^{2}}{a_{n}^{2}} \bigg),
   \qquad t \in [0,1],$$
with $a_{n}$ as in (\ref{e:niz}) and
$$b_{n} = \mathrm{E} \bigg( \frac{X_{1}}{a_{n}} 1_{\big\{ \frac{|X_{1}|}{a_{n}} \leq 1 \big\}} \bigg).$$
The main idea is to represent $L_n$ as the image of the
time-space point process $N_n$ under an appropriate summation
functional. Then, using certain continuity properties of this
functional, by the continuous mapping theorem we
transfer the weak convergence of $N_n$ in (\ref{e:BaTa}) to
weak convergence of $L_n$.
In the case $\alpha \in [1,2)$ we need the following additional assumption to deal with small jumps.
\begin{cond}\label{c:step6cond}
For all $\delta > 0$,
$$
  \lim_{u \downarrow 0} \limsup_{n \to \infty} \Pr \bigg[
  \max_{0 \le k \le n}  \bigg| \sum_{i=1}^{k} \bigg( \frac{X_{i}}{a_{n}}
  1_{ \big\{ \frac{|X_{i}|}{a_{n}} \le u \big\} } -  \E \bigg( \frac{X_{i}}{a_{n}}
  1_{ \big\{ \frac{|X_{i}|}{a_{n}} \le u \big\} } \bigg) \bigg) \bigg| > \delta
  \bigg]=0.$$
\end{cond}
Condition~\ref{c:step6cond} can be hard to check for dependent sequences, but for instance it holds for $\rho$--mixing processes with a certain rate (see Jakubowski and Kobus~\cite{JaKo89} and Tyran-Kami\'{n}ska~\cite{TK2010}).

Denote by $\sum_{j}\delta_{\eta_{j}}$ a point process with the distribution equal to the distribution of $\sum_{j}\delta_{\eta_{1j}}$, where the latter point process is described in (\ref{e:BaTa}).
For $ \alpha \leq 1$ it holds that
\begin{equation}\label{e:MW1}
 \mathrm{E} \Big( \sum_{j}|\eta_{j}| \Big)^{\alpha} < \infty
\end{equation}
(see Davis and Hsing~\cite{DaHs95}), but it may fail for $\alpha > 1$ (see Mikosch and Wintenberger~\cite{MiWi14}).
Hence in the case $\alpha \in [1,2)$ we will assume additionally the following condition.
\begin{cond}\label{c:BPScond1}
It holds that
\begin{equation}\label{e:BPScond1}
  \left. \begin{array}{lc}
                                   \mathrm{E} \bigg[ \sum_{j}\eta_{j} \log \Big( |\eta_{j}|^{-1} \sum_{i}|\eta_{i}| \Big) \bigg] < \infty, & \quad \alpha =1,\\[1em]
                                   \mathrm{E} \Big( \sum_{j}|\eta_{j}| \Big)^{\alpha} < \infty, & \quad \alpha \in (1,2),
                                 \end{array}\right.
\end{equation}
with the convention $\eta_{j} \log ( |\eta_{j}|^{-1} \sum_{i}|\eta_{i}|)=0$ if $\eta_{j}=0$.
\end{cond}
For a discussion about the condition for $\alpha=1$ in (\ref{e:BPScond1}) see Remark 4.8 in Basrak et al.~\cite{BaPlSo}.
Fix $0 < u < \infty$ and define the summation functional
$$ \Phi^{(u)} \colon \mathbf{M}_{p}([0,1] \times \EE) \to D([0,1], \mathbb{R}^{2})$$
by
$$ \Phi^{(u)} \Big( \sum_{i}\delta_{(t_{i}, x_{i})} \Big) (t)
  =  \Big( \sum_{t_{i} \leq t}x_{i}\,1_{\{u < |x_{i}| < \infty \}},  \sum_{t_{i} \leq t} x_{i}^{2}\,1_{\{u < |x_{i}| < \infty \}}  \Big), \qquad t \in [0,1],$$
  where the space $\mathbf{M}_p([0,1] \times \EE)$ of Radon point
measures on $[0,1] \times \EE$ is equipped with the vague
topology (see Resnick~\cite{Re87}, Chapter 3). Let $\Lambda = \Lambda_{1} \cap \Lambda_{2}$, where
\begin{multline*}
 \Lambda_{1} =
 \{ \eta \in \mathbf{M}_{p}([0,1] \times \EE) :
   \eta ( \{0,1 \} \times \EE) = 0 = \eta ([0,1] \times \{ \pm \infty, \pm u \}) \}, \\[1em]
 \shoveleft \Lambda_{2} =
 \{ \eta \in \mathbf{M}_{p}([0,1] \times \EE) :
  \eta ( \{ t \} \times (u, \infty]) \cdot \eta ( \{ t \} \times [-\infty,-u)) = 0 \\
  \text{for all $t \in [0,1]$} \}.
\end{multline*}
The elements
of $\Lambda_2$ have the property that atoms in $[0,1] \times (\EE \setminus [-u,u])$ with the same time
coordinate are all on the same side of the time axis. Similar to Lemma 3.1 in Basrak et al.~\cite{BKS} one can prove that the Poisson cluster process $N$ in (\ref{e:BaTa}) a.s.~belongs to the set $\Lambda$ given that each cluster of large values of $(X_{n})$ contains only values of the same sign. Precisely,

\begin{lem}\label{l:prob1}
Assume that with probability one the tail process $(Y_{i})_{i \in \mathbb{Z}}$ of the sequence $(X_{n})$ has no two values of the opposite sign. Then
$ \Pr ( N \in \Lambda ) = 1$.
\end{lem}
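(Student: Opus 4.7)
The plan is to verify $\Pr(N \in \Lambda_1) = 1$ and $\Pr(N \in \Lambda_2) = 1$ separately; since both events have full probability, so does their intersection.

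For $\Lambda_1$, I would exploit the Poisson structure of $\sum_i \delta_{(T_i, P_i)}$. The intensity $\mathrm{Leb} \times \nu$ is nonatomic in the time coordinate, so almost surely no $T_i$ equals $0$ or $1$, giving $N(\{0,1\} \times \mathbb{E}) = 0$ a.s. For the spatial exclusions, each atom of $N$ has the form $P_i \eta_{ij}$. By construction $|\eta_{ij}| = |Z_j|/L_Z \leq 1$ and $P_i < \infty$ a.s., so no atom equals $\pm \infty$. To rule out atoms at $\pm u$, I would condition on the i.i.d.\ sequence of cluster processes $(\eta_{ij})_{i,j}$ (independent of the underlying Poisson process): on $\{\eta_{ij} \neq 0\}$ the event $\{P_i \eta_{ij} = \pm u\}$ pins $P_i$ to at most two specific values, which has probability zero because each $P_i$ has the continuous density $\theta \alpha x^{-\alpha - 1}$ on $(0, \infty)$; on $\{\eta_{ij} = 0\}$ the product is $0 \neq \pm u$. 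A countable union bound over $(i,j)$ closes the argument.

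For $\Lambda_2$, I would first invoke that the time points of the underlying Poisson process are almost surely distinct, so any two atoms of $N$ sharing a time coordinate must originate from the same cluster index $i$; concretely they are of the form $\{P_i \eta_{ij}\}_j$. Since $P_i > 0$, the sign of $P_i \eta_{ij}$ coincides with that of $\eta_{ij}$; thus the defining condition of $\Lambda_2$ reduces to showing that for each $i$ the family $\{\eta_{ij}\}_j$ contains no two values of opposite sign, a.s. By construction $(\eta_{ij})_j \stackrel{d}{=} (Z_j / L_Z)_j$, and $(Z_j)_j$ has the law of $(Y_j)_j$ conditioned on $\{\sup_{i \leq -1} |Y_i| \leq 1\}$. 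The hypothesis of the lemma states that the "no two opposite signs" event has probability one under the unconditional law of $(Y_j)$, hence it retains probability one under this conditional law; division by the positive scalar $L_Z$ preserves signs, so the property transfers to $(\eta_{ij})_j$. Intersecting over the countable cluster index $i$ then gives $\Pr(N \in \Lambda_2) = 1$.

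The work is largely bookkeeping rather than one hard step; the only mild subtlety I expect to attend to is the passage from the unconditional law of $(Y_j)$ to the conditional law of $(Z_j)$, which is automatic here because the excluded event has unconditional probability zero and hence also conditional probability zero given any event of positive probability (note $\Pr(\sup_{i \leq -1} |Y_i| \leq 1) \geq \theta > 0$ by \eqref{E:theta:spectral}). Once this is settled, the argument closely mirrors Lemma 3.1 in Basrak et al.~\cite{BKS}.
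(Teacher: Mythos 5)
Your argument is correct and is essentially the standard one: the paper itself gives no proof, deferring to the analogue of Lemma 3.1 in Basrak et al.~\cite{BKS}, and your treatment of $\Lambda_1$ (nonatomicity of the intensity in time, $|\eta_{ij}|\le 1$ with $P_i<\infty$, and the marked-Poisson/absolute-continuity argument ruling out atoms at $\pm u$) together with your reduction of $\Lambda_2$ to the within-cluster sign property of $(Z_j/L_Z)_j$ via distinctness of the $T_i$ and $\theta>0$ is exactly that argument. The only cosmetic imprecision is calling $\theta\alpha x^{-\alpha-1}$ a ``density'' of $P_i$ (it is the intensity of the Poisson process, not a probability density), but the absolute-continuity conclusion you draw from it is the right one.
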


In the theorem below we establish functional convergence of the process $L_{n}$, with the limit $L = (V, W)$ consisting of stable L\'{e}vy processes $V$ and $W$. The distribution of a L\'{e}vy process $V$ is characterized by its
characteristic triple, that is, the characteristic triple of the infinitely divisible distribution
of $V(1)$. The characteristic function of $V(1)$ and the characteristic triple
$(a, \nu, b)$ are related in the following way:
\begin{equation}\label{e:Kintchin}
  \mathrm{E} [e^{izV(1)}] = \exp \biggl( -\frac{1}{2}az^{2} + ibz + \int_{\mathbb{R}} \bigl( e^{izx}-1-izx 1_{[-1,1]}(x) \bigr)\,\nu(\rmd x) \biggr)
\end{equation}
for $z \in \mathbb{R}$. Here $a \ge 0$, $b \in \mathbb{R}$ are constants, and $\nu$ is a measure on $\mathbb{R}$ satisfying
$$ \nu ( \{0\})=0 \qquad \text{and} \qquad \int_{\mathbb{R}}(|x|^{2} \wedge 1)\,\nu(\rmd x) < \infty.$$

\begin{thm}\label{t:functconvergence2}
Let $(X_{n})$ be a stationary sequence of random variables, which is $\beta$--mixing and jointly regularly varying with index $\alpha \in (0,2)$, and of which the tail process $(Y_{i})_{i \in \mathbb{Z}}$ almost surely has no two values of the opposite sign. Suppose that Condition~\ref{c:mixcond2} holds. If $\alpha \in [1,2)$ also suppose that Conditions~\ref{c:step6cond} and \ref{c:BPScond1} hold.
Then
$L_{n} \dto L$ as $n \to \infty$,
in $D([0,1], \mathbb{R}^{2})$ endowed with the weak $M_{1}$ topology, where $L = (V,W)$, $V$
is an $\alpha$--stable L\'{e}vy process with characteristic triple $(0,
\nu_{1}, \gamma_{1})$ and $W$ is an $\alpha/2$--stable L\'{e}vy process with characteristic triple $(0,
\nu_{2}, \gamma_{2})$,
 where
\begin{align*}
 \nu_{1}(\rmd x) &= \theta \alpha \big( c_{+} 1_{(0,\infty)}(x) + c_{-} 1_{(-\infty, 0)}(x) \big) |x|^{-\alpha -1}\,\rmd x,\\
\nu_{2}(\rmd x) &=  \frac{\theta \alpha}{2} \mathrm{E}\bigg( \sum_{j}\eta_{j}^{2}\bigg)^{\alpha/2} x^{-\alpha/2 -1}1_{(0,\infty)}(x) \,\rmd x,\\
\gamma_{1} &= \left\{ \begin{array}{ll}
                                   \frac{\alpha}{\alpha-1} \big( p-q - \theta (c_{+}-c_{-}) \big), & \alpha \in (0,1) \cup (1,2),\\[0.2em]
                                   - \theta\mathrm{E}\bigg[ \sum_{j}\eta_{j} \log \Big( \Big| \sum_{i}\eta_{i}\eta_{j}^{-1} \Big|  \Big) \bigg], & \alpha =1,
                                 \end{array}\right.\\
\gamma_{2} &= \frac{\theta \alpha}{2-\alpha} \mathrm{E}\bigg( \sum_{j}\eta_{j}^{2}\bigg)^{\alpha/2},
\end{align*}
with $c_{+} = \mathrm{E} [ ( \sum_{j}\eta_{j} )^{\alpha} 1_{ \{ \sum_{j}\eta_{j} > 0 \}} ]$,
   $c_{-} = \mathrm{E} [( - \sum_{j}\eta_{j} )^{\alpha} 1_{ \{ \sum_{j}\eta_{j} < 0 \}}]$, and $p$ and $q$ as in $(\ref{e:mu})$.
\end{thm}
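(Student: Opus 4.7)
The plan is to follow the point-process approach of Basrak, Krizmani\'{c} and Segers~\cite{BKS} and Basrak et al.~\cite{BaPlSo}, extended from the univariate partial sum to the bivariate process $L_n$. The idea is first to prove weak $M_1$ convergence of the big-jump truncation of $L_n$ via the continuous mapping theorem applied to $N_n \dto N$, and then to show that the contribution of the small jumps is asymptotically negligible in both coordinates; a standard converging-together argument then upgrades this to $L_n \dto L$.

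Concretely, for each $u > 0$ I would introduce the truncated and centred process
$$\widetilde{L}_n^{(u)}(t) = \Phi^{(u)}(N_n)(t) - \big(\lfloor nt \rfloor b_n^{(u)},\, 0\big), \qquad t \in [0,1],$$
where $b_n^{(u)} = \mathrm{E}[(X_1/a_n) 1_{\{u < |X_1|/a_n \leq 1\}}]$. The first step is to verify that the composition of $\Phi^{(u)}$ with this affine time-space shift in the first coordinate is continuous on the set $\Lambda$ when the target $D([0,1],\mathbb{R}^2)$ carries the weak $M_1$ topology. Since the metric $d_{p}$ of (\ref{e:defdp}) metrises this topology componentwise, continuity reduces to the one-dimensional $M_1$-continuity argument of Lemma~3.2 in~\cite{BKS} applied separately to each coordinate; for the second coordinate this is even easier because the resulting paths are nondecreasing. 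Combined with Lemma~\ref{l:prob1}, which ensures $N \in \Lambda$ almost surely under the no-opposite-sign assumption on the tail process, the continuous mapping theorem then yields $\widetilde{L}_n^{(u)} \dto \widetilde{L}^{(u)}$ for every fixed $u > 0$.

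Next, letting $u \downarrow 0$, the limit $\widetilde{L}^{(u)}$ is a bivariate compound Poisson process whose cluster contributions are $P_i \sum_j \eta_{ij}$ in the first coordinate and $P_i^2 \sum_j \eta_{ij}^2$ in the second. A L\'{e}vy--Khintchine calculation, combined with the polar change of variables induced by the intensity $\theta \alpha x^{-\alpha-1}\,\mathrm{d}x$ of $(P_i)$, identifies the $u \downarrow 0$ limit as $L = (V,W)$ with the characteristic triples $(0,\nu_1,\gamma_1)$ and $(0,\nu_2,\gamma_2)$ as stated; Condition~\ref{c:BPScond1} supplies the integrability needed for $\alpha \in [1,2)$, while $\alpha/2 < 1$ controls the second coordinate. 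It then remains to establish the small-jump negligibility
$$\lim_{u \downarrow 0} \limsup_{n \to \infty} \Pr\big(d_{p}(L_n, \widetilde{L}_n^{(u)}) > \eta\big) = 0 \qquad \text{for every } \eta > 0.$$
For the first coordinate this is exactly Condition~\ref{c:step6cond} when $\alpha \in [1,2)$ and a routine moment bound when $\alpha \in (0,1)$. For the second coordinate a first-moment estimate suffices, since Karamata's theorem together with (\ref{e:niz}) gives $n a_n^{-2}\, \mathrm{E}[X_1^2 1_{\{|X_1|\leq u a_n\}}] \to \alpha u^{2-\alpha}/(2-\alpha)$, which tends to $0$ as $u \downarrow 0$.

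The main obstacle is the small-jump control in the first coordinate when $\alpha \in [1,2)$: without independence the naive variance argument fails, and this is precisely the reason for imposing Condition~\ref{c:step6cond}. A subtler issue is that $L$ has simultaneous jumps in its two coordinates (each large jump of $X_i$ produces a jump in both partial sums at the same time), so the convergence cannot be upgraded to the strong $M_1$ topology on $D([0,1],\mathbb{R}^2)$; working in the weak (product) $M_1$ topology via the metric $d_{p}$ is essential and dictates the choice of continuity lemmas in the first step.
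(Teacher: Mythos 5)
Your proposal is correct and follows essentially the same route as the paper: continuous mapping applied to the point process convergence $N_{n}\dto N$ via the summation functional $\Phi^{(u)}$ on $\Lambda$, identification of the $u\downarrow 0$ limit (with Condition~\ref{c:BPScond1} and the result of Basrak et al.~\cite{BaPlSo} handling the compensated first coordinate for $\alpha\in[1,2)$), and the same small-jump negligibility estimates. The only cosmetic difference is that the paper applies the continuous mapping theorem to $\Phi^{(u)}$ alone and then incorporates the $n$-dependent centering $\lfloor nt\rfloor b_{n}^{(u)}$ through a separate deterministic-perturbation lemma, rather than folding the shift into the functional.
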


\begin{proof}
For an arbitrary $u>0$ the summation functional $\Phi^{(u)}$ is continuous on the set $\Lambda$ (see Lemma 4 in Krizmani\'{c}~\cite{Kr23}), and by
Lemma~\ref{l:prob1} this set almost surely contains the limiting point process $N$ from (\ref{e:BaTa}). Hence an application of the continuous mapping theorem yields
$\Phi^{(u)}(N_{n}) \dto \Phi^{(u)}(N)$ in $D([0,1], \mathbb{R}^{2})$ under the weak $M_{1}$ topology, that is
\begin{multline}\label{e:mainconvC2}
      L_{n}^{(u)}(\,\cdot\,) := \bigg( \sum_{i = 1}^{\lfloor n \, \cdot \, \rfloor} \frac{X_{i}}{a_{n}}
    1_{ \bigl\{ \frac{|X_{i}|}{a_{n}} > u \bigr\} },  \sum_{i = 1}^{\lfloor n \, \cdot \, \rfloor} \frac{X_{i}^{2}}{a_{n}^{2}}
    1_{ \bigl\{ \frac{|X_{i}|}{a_{n}} > u \bigr\} } \bigg) \\
    \dto L^{(u)}(\,\cdot\,) :=  \bigg( \sum_{T_{i} \le \, \cdot} \sum_{j}P_{i}\eta_{ij} 1_{\{ P_{i}|\eta_{ij}| > u \}}, \sum_{T_{i} \le \, \cdot} \sum_{j}P_{i}^{2}\eta_{ij}^{2} 1_{\{ P_{i}|\eta_{ij}| > u \}} \bigg).
\end{multline}
From (\ref{e:onedimregvar}) we have, as $n \to \infty$,
\begin{eqnarray}\label{e:conv12}
 \nonumber \floor{nt} \mathrm{E} \Big( \frac{X_{1}}{a_{n}} 1_{ \big\{ u < \frac{|X_{1}|}{a_{n}} \leq 1 \big\} } \Big) &=& \frac{\floor{nt}}{n} \int_{u < |x| \leq 1} x n\,\Pr \Big( \frac{X_{1}}{a_{n}} \in \rmd x \Big) \\[0.6em]
   & \to &  t \int_{u < |x| \leq 1} x\mu(\rmd x)
\end{eqnarray}
for every $t \in [0,1]$, and this convergence is uniform in $t$. Therefore applying Lemma 2.1 from Krizmani\'{c}~\cite{Kr18} (adjusted for the $M_{1}$ convergence) to (\ref{e:mainconvC2}) and (\ref{e:conv12}) we obtain, as $n \to \infty$,
\begin{multline}\label{e:mainconvC3}
      \widetilde{L}_{n}^{(u)}(\,\cdot\,) := \bigg( \sum_{i = 1}^{\lfloor n \, \cdot \, \rfloor} \frac{X_{i}}{a_{n}}
    1_{ \bigl\{ \frac{|X_{i}|}{a_{n}} > u \bigr\} } - \floor{n\,\cdot\,}b_{n}^{(u)}, \sum_{i = 1}^{\lfloor n \, \cdot \, \rfloor} \frac{X_{i}^{2}}{a_{n}^{2}}
    1_{ \bigl\{ \frac{|X_{i}|}{a_{n}} > u \bigr\} } \bigg) \\
    \dto \widetilde{L}^{(u)}(\,\cdot\,) :=  \bigg( \sum_{T_{i} \le \, \cdot} \sum_{j}P_{i}\eta_{ij} 1_{\{ P_{i}|\eta_{ij}| > u \}} - (\,\cdot\,) b^{(u)}, \sum_{T_{i} \le \, \cdot} \sum_{j}P_{i}^{2}\eta_{ij}^{2} 1_{\{ P_{i}|\eta_{ij}| > u \}} \bigg),
\end{multline}
where
$$b_{n}^{(u)} = \mathrm{E} \Big( \frac{X_{1}}{a_{n}} 1_{ \big\{ u < \frac{|X_{1}|}{a_{n}} \leq 1 \big\} } \Big) \qquad \textrm{and} \qquad
 b^{(u)} =  \int_{u < |x| \leq 1} x\mu(\rmd x).$$
Let $U_{i} = \sum_{j}\eta_{ij}^{2}$, $i=1,2,\ldots$, and note that
by relation (\ref{e:MW1}) if $\alpha \in (0,1]$ and Condition~\ref{c:BPScond1} if $\alpha \in (1,2)$ we have
\begin{equation*}\label{e:MW2-2}
 \mathrm{E}(U_{1})^{\alpha/2} = \mathrm{E} \Big( \sum_{j}|\eta_{j}|^{2} \Big)^{\alpha/2} \leq \mathrm{E} \Big( \sum_{j}|\eta_{j}| \Big)^{\alpha} < \infty,
\end{equation*}
where the first inequality in the relation above holds by the triangle inequality $|\sum_{i}a_{i}|^{s} \leq \sum_{i}|a_{i}|^{s}$ with $s \in (0,1]$.
Proposition 5.2 and Proposition 5.3 in Resnick~\cite{Resnick07} imply that $P_{i}^{2}U_{i}$, $i=1,2,\ldots$, are the points of a Poisson process with intensity measure $(\theta \alpha /2) \mathrm{E}(U_{1})^{\alpha/2} x^{-\alpha/2-1}\,\rmd x$ for $x>0$. Since $\alpha/2 < 1$, these points are summable (see the proof of Theorem 3.1 in Davis and Hsing~\cite{DaHs95}). It follows that for all $t\in [0,1]$
$$ \sum_{T_{i} \leq t} \sum_{j} P_{i}^{2}\eta_{ij}^{2} 1_{\{ P_{i}|\eta_{ij}| > u \}} \to \sum_{T_{i} \leq t} \sum_{j}P_{i}^{2}\eta_{ij}^{2}$$
almost surely as $u \to 0$, and hence by the dominated convergence theorem
$$ \sup_{t \in [0,1]} \bigg| \sum_{T_{i} \leq t} \sum_{j} P_{i}^{2}\eta_{ij}^{2} 1_{\{ P_{i}|\eta_{ij}| > u \}} - \sum_{T_{i} \leq t} \sum_{j}P_{i}^{2}\eta_{ij}^{2} \bigg| \leq \sum_{i=1}^{\infty}\sum_{j}P_{i}^{2}|\eta_{ij}|^{2} 1_{ \{ P_{i}|\eta_{ij}| \leq u \} } \to 0$$
almost surely as $u \to 0$.
Since uniform convergence implies Skorokhod $M_{1}$ convergence, we get
\begin{equation}\label{e:dm12N}
d_{M_{1}} \Big( \sum_{T_{i} \leq\,\cdot} \sum_{j} P_{i}^{2}\eta_{ij}^{2} 1_{\{ P_{i}|\eta_{ij}| > u \}}, \sum_{T_{i} \leq\,\cdot} \sum_{j}P_{i}^{2}\eta_{ij}^{2} \Big) \to 0
\end{equation}
almost surely as $u \to 0$. By Proposition 5.2 and Proposition 5.3 in Resnick~\cite{Resnick07}, the process
$$\sum_{i} \delta_{(T_{i}, \sum_{j}P_{i}^{2}\eta_{ij}^{2}})$$
 is a Poisson process with intensity measure
$ Leb \times \nu_{2}$.
By the It\^{o} representation of the L\'{e}vy process (see Resnick~\cite{Resnick07}, pp.~150--153) and Theorem 14.3 in Sato~\cite{Sa99},
$$\sum_{T_{i} \leq\,\cdot} \sum_{j}P_{i}^{2}\eta_{ij}^{2}$$
is an $\alpha/2$--stable L\'{e}vy process with characteristic triple $(0, \nu_{2}, \theta \alpha \mathrm{E}(\sum_{j}\eta_{j}^{2})^{\alpha/2}/(2-\alpha))$, i.e. $(0,\nu_{2}, \gamma_{2})$.

Now we treat separately the cases $\alpha \in (0,1)$ and $\alpha \in [1,2)$.
Assume first $\alpha \in (0,1)$. By denoting $\widetilde{U}_{i} = \sum_{j}\eta_{ij}$, $i=1,2,\ldots$,
we see that
\begin{equation*}\label{e:MWnew}
 \mathrm{E}|\widetilde{U}_{1}|^{\alpha} \leq \mathrm{E} \Big( \sum_{j}|\eta_{j}| \Big)^{\alpha} < \infty,
\end{equation*}
and by the same arguments as above we obtain
\begin{equation}\label{e:dm01N}
d_{M_{1}} \Big( \sum_{T_{i} \leq\,\cdot} \sum_{j} P_{i}\eta_{ij} 1_{\{ P_{i}|\eta_{ij}| > u \}}, \sum_{T_{i} \leq\,\cdot} \sum_{j}P_{i}\eta_{ij} \Big) \to 0
\end{equation}
almost surely as $u \to 0$. Using the same arguments as before for process $\sum_{i} \delta_{(T_{i}, \sum_{j}P_{i}^{2}\eta_{ij}^{2}})$ we conclude that
$$\sum_{i} \delta_{(T_{i}, \sum_{j}P_{i}\eta_{ij} })$$
 is a Poisson process with intensity measure
$ Leb \times \nu_{1}$, and that the process
$$ \sum_{T_{i} \leq\,\cdot} \sum_{j}P_{i}\eta_{ij}$$
 is an $\alpha$--stable L\'{e}vy process with characteristic triple $(0, \nu_{1}, (c_{+}-c_{-}) \theta \alpha / (1-\alpha))$. Since
$$ b^{(u)} = \int_{u < |x| \leq 1} x\mu(\rmd x) \to (p-q) \frac{\alpha}{1-\alpha} \qquad \textrm{as} \ u \to 0,$$
with $p$ and $q$ as in $(\ref{e:mu})$, we have, as $u \to 0$,
$$ (\,\cdot\,) b^{(u)} \to (\,\cdot\,) (p-q) \frac{\alpha}{1-\alpha}$$
in $D([0,1], \mathbb{R})$. The latter function is continuous, and hence from (\ref{e:dm01N}) by applying Corollary 12.7.1 in Whitt~\cite{Whitt02} we obtain
\begin{equation}\label{e:dm01N2}
d_{M_{1}} \Big( \sum_{T_{i} \leq\,\cdot} \sum_{j} P_{i}\eta_{ij} 1_{\{ P_{i}|\eta_{ij}| > u \}} - (\,\cdot\,)b^{(u)}, V_{\alpha}(\,\cdot\,) \Big) \to 0
\end{equation}
almost surely as $u \to 0$, where
$$ V_{\alpha}(t) = \sum_{T_{i} \leq t} \sum_{j}P_{i}\eta_{ij} - t (p-q)\frac{\alpha}{1-\alpha}, \qquad t \in [0,1],$$
is an $\alpha$--stable L\'{e}vy process with characteristic triple
$$\Big( 0, \nu_{1}, \frac{\alpha}{1-\alpha} (\theta (c_{+}-c_{-}) - (p-q)) \Big),$$
 i.e. $(0, \nu_{1}, \gamma_{1})$.

Assume now $\alpha \in [1,2)$.
Under the $\beta$--mixing property and Conditions~\ref{c:mixcond2}, \ref{c:step6cond} and \ref{c:BPScond1}, Lemma 6.4 in Basrak et al.~\cite{BaPlSo} implies the existence of an $\alpha$--stable L\'{e}vy process $V_{\alpha}$ such that, as $u \to 0$ (along some subsequence)
$$ \sum_{T_{i} \le \, \cdot} \sum_{j}P_{i}\eta_{ij} 1_{\{ P_{i}|\eta_{ij}| > u \}} - (\,\cdot\,) b^{(u)} \to V_{\alpha}(\,\cdot\,)$$
uniformly almost surely. Again, since uniform convergence implies Skorokhod $M_{1}$ convergence, it follows that
\begin{equation}\label{e:dm13N}
d_{M_{1}} \Big( \sum_{T_{i} \leq\,\cdot} \sum_{j} P_{i}\eta_{ij} 1_{\{ P_{i}|\eta_{ij}| > u \}} - (\,\cdot\,) b^{(u)}, V_{\alpha}(\,\cdot\,) \Big) \to 0
\end{equation}
almost surely as $u \to 0$. By the same arguments used in the proof of Theorem 3.4 in Krizmani\'{c}~\cite{Kr20} it can be shown that $(0,\nu_{1}, \gamma_{1})$ is the characteristic triple of the process $V_{\alpha}$, and hence we omit it here.

Recalling the definition of the metric $d_{p}$ in (\ref{e:defdp}), from (\ref{e:dm12N}), (\ref{e:dm01N2}) and (\ref{e:dm13N}) we obtain
\begin{equation*}
d_{p}(\widetilde{L}^{(u)}, L) \to 0
\end{equation*}
almost surely as $u \to 0$, where
$$L(t) = \bigg(V_{\alpha}(t) , \sum_{T_{i} \le t} \sum_{j}P_{i}^{2}\eta_{ij}^{2} \bigg),$$
with
$$ V_{\alpha}(t) = \left\{ \begin{array}{lc}
                                  \displaystyle \sum_{T_{i} \leq t} \sum_{j}P_{i}\eta_{ij} - t (p-q)\frac{\alpha}{1-\alpha}, & \quad \alpha \in (0,1),\\[1.2em]
                                  \displaystyle \lim_{u \to 0} \bigg( \sum_{T_{i} \le t} \sum_{j}P_{i}\eta_{ij} 1_{\{ P_{i}|\eta_{ij}| > u \}} - t b^{(u)} \bigg), & \quad \alpha \in [1,2).
                                 \end{array}\right.$$
 Since almost sure convergence implies weak convergence, we have, as $u \to 0$,
\begin{equation}\label{e:mainconvC4}
 \widetilde{L}^{(u)} \dto L
\end{equation}
in $D([0,1], \mathbb{R}^{2})$ endowed with the weak $M_{1}$ topology.

If we show that
$$ \lim_{u \to 0}\limsup_{n \to \infty} \Pr(d_{p}(L_{n},\widetilde{L}_{n}^{(u)}) > \epsilon)=0,$$
for every $\epsilon >0$,
from (\ref{e:mainconvC3}) and (\ref{e:mainconvC4})
by a variant of Slutsky's theorem (see Theorem 3.5 in Resnick~\cite{Resnick07}) it will follow that
$ L_{n} \dto L$ as $n \to \infty$,
in $D([0,1], \mathbb{R}^{2})$ with the weak $M_{1}$ topology.
Since the
 metric $d_{p}$ on $D([0,1], \mathbb{R}^{2})$ is bounded above by the uniform metric on
 $D([0,1], \mathbb{R}^{2})$ (see Theorem 12.10.3 in Whitt~\cite{Whitt02}), it suffices to show that
 $$ \lim_{u \to 0} \limsup_{n \to \infty} \Pr \biggl(
 \sup_{t \in [0,1]} \|L_{n}(t) - \widetilde{L}_{n}^{(u)}(t)\| >
 \epsilon \biggr)=0.$$
Recalling the definitions of $L_{n}$ and $\widetilde{L}_{n}^{(u)}$, we have
\begin{eqnarray}\label{e:slutsky-C2}
    \nonumber \Pr \bigg(
     \sup_{t \in [0,1]} \|L_{n}(t) - \widetilde{L}_{n}^{(u)}(t)\| >  \epsilon \bigg) & & \\[0.3em]
   \nonumber & \hspace*{-28.5em} = & \ \hspace*{-14.5em} \Pr \bigg(
       \sup_{t \in [0,1]} \ \max \bigg\{  \bigg| \sum_{i=1}^{\lfloor nt \rfloor} \frac{X_{i}}{a_{n}}
       1_{ \big\{ \frac{|X_{i}|}{a_{n}} \leq u \big\} } - \lfloor nt \rfloor  \mathrm{E} \Big( \frac{X_{1}}{a_{n}} 1_{ \big\{  \frac{|X_{1}|}{a_{n}} \leq u \big\} } \Big) \bigg|,\bigg| \sum_{i=1}^{\lfloor nt \rfloor} \frac{X_{i}^{2}}{a_{n}^{2}}
       1_{ \big\{ \frac{|X_{i}|}{a_{n}} \leq u \big\} } \bigg|  \bigg\}  > \epsilon
       \bigg)\\[0.4em]
   \nonumber & \hspace*{-28.5em} \leq & \ \hspace*{-14.5em} \Pr \bigg( \max_{0 \leq k \leq n}
       \bigg| \sum_{i=1}^{k} \bigg( \frac{X_{i}}{a_{n}}
       1_{ \big\{ \frac{|X_{i}|}{a_{n}} \leq u \big\} } - \mathrm{E} \bigg( \frac{X_{i}}{a_{n}}
       1_{ \big\{ \frac{|X_{i}|}{a_{n}} \leq u \big\} } \bigg) \bigg) \bigg| > \epsilon
       \bigg) + \Pr \bigg(
       \sum_{i=1}^{n} \frac{X_{i}^{2}}{a_{n}^{2}}
       1_{ \big\{ \frac{|X_{i}|}{a_{n}} \leq u \big\} }  > \epsilon
       \bigg).
 \end{eqnarray}
 For $\alpha \in [1,2)$ the first term on the right-hand side of the above relation by Condition~\ref{c:step6cond} tends to zero if we first let $n \to \infty$ and then $u \to 0$. The same conclusion is valid for $\alpha \in (0,1)$, since in this case Condition~\ref{c:step6cond} automatically holds (see the end of the proof of Theorem 4.1 in Tyran-Kami\'{n}ska~\cite{TK2010}).

 Using stationarity and Markov's inequality we bound the second term on the right-hand side of the last relation above by
 $$\Pr \bigg(
       \sum_{i=1}^{n} \frac{X_{i}^{2}}{a_{n}^{2}}
       1_{ \big\{ \frac{|X_{i}|}{a_{n}} \leq u \big\} }  > \epsilon
       \bigg) \leq \epsilon^{-1} n \mathrm{E} \bigg( \frac{X_{1}^{2}}{a_{n}^{2}}
       1_{ \big\{ \frac{|X_{1}|}{a_{n}} \leq u \big\} } \bigg).$$
 Note that
  $$n \mathrm{E} \bigg( \frac{X_{1}^{2}}{a_{n}^{2}}
       1_{ \big\{ \frac{|X_{1}|}{a_{n}} \leq u \big\} } \bigg) =  u^{2} \cdot n \Pr (|X_{1}| > a_{n}) \cdot \frac{\Pr(|X_{1}| > ua_{n})}{\Pr(|X_{1}|>a_{n})} \cdot
          \frac{\mathrm{E}(|X_{1}|^{2} 1_{ \{ |X_{1}| \leq ua_{n} \} })}{ u^{2}a_{n}^{2} \Pr (|X_{1}| >ua_{n})}.$$
 Since $X_{1}$ is a regularly varying random variable with index $\alpha$, it follows immediately that
  $$ \frac{\Pr(|X_{1}| > ua_{n})}{\Pr(|X_{1}|>a_{n})} \to u^{-\alpha}$$
  as $n \to \infty$. By Karamata's theorem
  $$ \lim_{n \to \infty} \frac{\mathrm{E}(|X_{1}|^{2} 1_{ \{ |X_{1}| \leq ua_{n} \} })}{ u^{2}a_{n}^{2} \Pr (|X_{1}| >ua_{n})} = \frac{\alpha}{2-\alpha}.$$
 Taking into account relation (\ref{e:niz}) we obtain
\begin{eqnarray*}
\limsup_{n \to \infty} \Pr \bigg(
       \sum_{i=1}^{n} \frac{X_{i}^{2}}{a_{n}^{2}}
       1_{ \big\{ \frac{|X_{i}|}{a_{n}} \leq u \big\} }  > \epsilon
       \bigg)
& \leq & \limsup_{n \to \infty} \epsilon^{-1} n \mathrm{E} \bigg( \frac{X_{1}^{2}}{a_{n}^{2}}
       1_{ \big\{ \frac{|X_{1}|}{a_{n}} \leq u \big\} } \bigg)\\[0.6em]
& = & \epsilon^{-1} u^{2-\alpha} \frac{\alpha}{2-\alpha},
\end{eqnarray*}
 and now letting $u \to 0$, since $2-\alpha >0$,
 $$\lim_{u \to 0} \limsup_{n \to \infty} \Pr \bigg(
       \sum_{i=1}^{n} \frac{X_{i}^{2}}{a_{n}^{2}}
       1_{ \big\{ \frac{|X_{i}|}{a_{n}} \leq u \big\} }  > \epsilon
       \bigg)=0.$$
Therefore we conclude that
 $$ \lim_{u \to 0} \limsup_{n \to \infty} \Pr \bigg(
     \sup_{t \in [0,1]} \|L_{n}(t) - \widetilde{L}_{n}^{(u)}(t)\| >  \epsilon \bigg) = 0.$$
\end{proof}

\begin{rem}\label{r:jointdepend}
From the proof of Theorem~\ref{t:functconvergence2} it follows that the components of the limiting process $L=(V, W)$ can be expressed as functionals of the limiting point process $N = \sum_{i} \sum_{j} \delta_{(T_{i}, P_{i}\eta_{ij})}$ from relation (\ref{e:BaTa}), i.e.
$$ V(\,\cdot\,) =  \left\{ \begin{array}{lc}
                                  \displaystyle \sum_{T_{i} \leq\,\cdot} \sum_{j}P_{i}\eta_{ij} - (\,\cdot\,) (p-q)\frac{\alpha}{1-\alpha}, & \quad \alpha \in (0,1),\\[1.2em]
                                  \displaystyle \lim_{u \to 0} \bigg( \sum_{T_{i} \le\,\cdot} \sum_{j}P_{i}\eta_{ij} 1_{\{ P_{i}|\eta_{ij}| > u \}} - (\,\cdot\,)  \int_{u < |x| \leq 1} x\mu(\rmd x) \bigg), & \quad \alpha \in [1,2).
                                 \end{array}\right.$$
where the limit in the latter case holds almost surely uniformly on $[0,1]$ (along some subsequence), and
$$ W(\,\cdot\,) = \sum_{T_{i} \leq\,\cdot} \sum_{j}P_{i}^{2}\eta_{ij}^{2}.$$
 If $\alpha \in (0,1)$, the centering function in the definition of the stochastic process $L_{n}$
 can be removed (for more details see Section 3.5.2 in Basrak et al.~\cite{BKS}).
\end{rem}

\section{Self-normalized partial sum processes}
\label{S:Selfnormalized}

The function $\pi \colon D([0,1], \mathbb{R}^{2}) \to \mathbb{R}^{2}$, definied by $\pi(x)=x(1)$, is continuous with respect to the weak $M_{1}$ topology on $D([0,1], \mathbb{R}^{2})$ (see Theorem 12.5.2 in Whitt~\cite{Whitt02}), and therefore under the conditions from Theorem~\ref{t:functconvergence2}, the continuous mapping theorem yields $\pi(L_{n}) \dto \pi(L)$ as $n \to \infty$, that is
$$  \bigg( \sum_{i = 1}^{n} \frac{X_{i}-c_{n}}{a_{n}},  \sum_{i = 1}^{n} \frac{X_{i}^{2}}{a_{n}^{2}} \bigg)
    \dto  (V(1), W(1)),$$
    with the limiting process as given in Remark~\ref{r:jointdepend}
and
\begin{equation*}\label{e:centbnew}
 c_{n} = a_{n}b_{n} = \mathrm{E} \big( X_{1} 1_{ \{ |X_{1}| \leq a_{n} \} } \big).
\end{equation*}
In particular
$$ \frac{\zeta_{n}^{2}}{a_{n}^{2}} = \sum_{i = 1}^{n} \frac{X_{i}^{2}}{a_{n}^{2}}  \dto W(1) = \sum_{i} \sum_{j}P_{i}^{2}\eta_{ij}^{2},$$
where the limiting random variable has a stable distribution.

\begin{thm}\label{t:functSN}
Let $(X_{n})$ be a sequence of random variables which satisfies all conditions in Theorem~\ref{t:functconvergence2}.
 Then
$$ \frac{S_{\lfloor n\,\cdot \rfloor}- \floor{n\,\cdot}c_{n}}{\zeta_{n}} \dto \frac{V(\,\cdot\,)}{\sqrt{W(1)}} \qquad \textrm{as} \ n \to \infty,$$
in $D([0,1], \mathbb{R})$ endowed with the $M_{1}$ topology, whit $L = (V,W)$ as described in Theorem~\ref{t:functconvergence2}.
\end{thm}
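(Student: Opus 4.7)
The plan is to derive the claim as a consequence of the joint functional convergence in Theorem~\ref{t:functconvergence2} by two successive applications of the continuous mapping theorem: first extracting the normalizer $\sqrt{L_n^{(2)}(1)}$ as a constant function, and then dividing.

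I would first define $g \colon D([0,1], \mathbb{R}^{2}) \to D([0,1], \mathbb{R}) \times C^{\uparrow}_{0}([0,1], \mathbb{R})$ by
$$g(x_{1}, x_{2}) = \bigl(x_{1},\,\widehat{\sqrt{x_{2}(1)}}\,\bigr),$$
which is well defined whenever $x_{2}(1) > 0$. Since weak $M_{1}$ convergence on products amounts to coordinate-wise $M_{1}$ convergence, continuity of $g$ at $(V, W)$ reduces to three standard facts: the first-coordinate projection is continuous; the evaluation $x_{2} \mapsto x_{2}(1)$ is $M_{1}$-continuous by Theorem 12.5.2 in Whitt~\cite{Whitt02} (every parametric representation $(r, u)$ satisfies $u(1) = x_{2}(1)$); and $a \mapsto \widehat{\sqrt{a}}$ is continuous from $(0, \infty)$ into $C^{\uparrow}_{0}([0,1], \mathbb{R})$ with the $M_{1}$ metric, since the embedding $a \mapsto \widehat{a}$ is an isometry from $\mathbb{R}$ into $(D([0,1], \mathbb{R}), d_{M_{1}})$. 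Because $W$ is an $\alpha/2$-stable subordinator with infinite L\'evy measure $\nu_{2}$, one has $W(1) > 0$ almost surely, so $g$ is almost surely continuous at $(V, W)$. The continuous mapping theorem then yields
$$\bigl(L_{n}^{(1)},\,\widehat{\sqrt{L_{n}^{(2)}(1)}}\,\bigr) \dto \bigl(V,\,\widehat{\sqrt{W(1)}}\,\bigr)$$
in $D([0,1], \mathbb{R}) \times C^{\uparrow}_{0}([0,1], \mathbb{R})$ endowed with the weak $M_{1}$ topology.

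Next I would appeal to Lemma~\ref{l:M1div}, which states that the quotient $h(x,y) = x/y$ is continuous from $D([0,1], \mathbb{R}) \times C^{\uparrow}_{0}([0,1], \mathbb{R})$ with the weak $M_{1}$ topology into $D([0,1], \mathbb{R})$ with the standard $M_{1}$ topology. Since $\widehat{\sqrt{W(1)}}$ lies almost surely in $C^{\uparrow}_{0}([0,1], \mathbb{R})$, a second application of the continuous mapping theorem gives
$$\frac{L_{n}^{(1)}(\,\cdot\,)}{\widehat{\sqrt{L_{n}^{(2)}(1)}}} \dto \frac{V(\,\cdot\,)}{\sqrt{W(1)}}$$
in $D([0,1], \mathbb{R})$ with the $M_{1}$ topology. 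A direct manipulation using $c_{n} = a_{n} b_{n}$ and $\zeta_{n}/a_{n} = \sqrt{L_{n}^{(2)}(1)}$ identifies the left-hand side at time $t$ as
$$\frac{L_{n}^{(1)}(t)}{\sqrt{L_{n}^{(2)}(1)}} = \frac{\sum_{i=1}^{\lfloor nt \rfloor} X_{i}/a_{n} - \lfloor nt \rfloor b_{n}}{\zeta_{n}/a_{n}} = \frac{S_{\lfloor nt \rfloor} - \lfloor nt \rfloor c_{n}}{\zeta_{n}},$$
which is precisely the process in the statement.

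The only substantive point to verify is that $g$ is defined and continuous on an event of probability one under the law of $(V, W)$, i.e.\ that $W(1) > 0$ almost surely. This is immediate because $W$ is a pure-jump $\alpha/2$-stable subordinator whose L\'evy measure $\nu_{2}$ has infinite total mass, so $W$ has infinitely many positive jumps in $[0,1]$. Beyond this observation, the argument consists only of routine applications of the continuous mapping theorem together with the preparatory Lemma~\ref{l:M1div}, and no further tail estimates or truncation steps are needed here, since all the delicate work has already been absorbed into Theorem~\ref{t:functconvergence2}.
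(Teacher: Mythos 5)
Your proof is correct and follows essentially the same route as the paper: both arguments first convert the second coordinate into the constant function built from its terminal value via the continuous mapping theorem (the paper uses Lemma~\ref{l:M1contconst} and defers the square root to a square-root variant of the division lemma, whereas you fold the square root into the first map so that Lemma~\ref{l:M1div} can be applied verbatim), and then divide. Your explicit verification that $W(1)>0$ almost surely, needed for the limit to land in $C^{\uparrow}_{0}([0,1],\mathbb{R})$, is a point the paper leaves implicit.
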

\begin{proof}
By Theorem~\ref{t:functconvergence2},
$L_{n} \dto L$ in $D([0,1], \mathbb{R}^{2})$
with the weak $M_{1}$ topology. From this convergence relation, since
$$\Pr[L \in D([0,1], \mathbb{R}) \times D_{m}([0,1], \mathbb{R})]=1,$$
by Lemma~\ref{l:M1contconst} and the continuous mapping theorem
we obtain, as $n \to \infty$,
\begin{equation}\label{e:mainconvSN}
      \widetilde{L}_{n}(\,\cdot\,) := \bigg( \sum_{i = 1}^{\lfloor n \, \cdot \, \rfloor} \frac{X_{i} - c_{n}}{a_{n}} ,  \sum_{i = 1}^{n} \frac{X_{i}^{2}}{a_{n}^{2}} \bigg)
    \dto \widetilde{L}(\,\cdot\,) :=  (V(\,\cdot\,), W(1))
\end{equation}
in $D([0,1], \mathbb{R}) \times D_{m}([0,1], \mathbb{R})$ with the weak $M_{1}$ topology.
Define the function $h \colon D([0,1], \mathbb{R}) \times C^{\uparrow}_{0}([0,1], \mathbb{R}) \to D([0,1], \mathbb{R})$ by
$ h(x,y) = x/\sqrt{y}$. A version of Lemma~\ref{l:M1div} (with the square root in the denominator) implies that
$h$ is a continuous function
when $D([0,1], \mathbb{R}) \times C^{\uparrow}_{0}([0,1], \mathbb{R})$ is endowed with the weak $M_{1}$ topology and $D([0,1], \mathbb{R})$ is endowed with the standard $M_{1}$ topology. Since
$$ \Pr[\widetilde{L} \in D([0,1], \mathbb{R}) \times C^{\uparrow}_{0}([0,1], \mathbb{R})]=1,$$
 an application of the continuous mapping theorem yields $h(\widetilde{L}_{n}) \dto h(\widetilde{L})$ as $n \to \infty$, that is
 $$ \frac{1}{\zeta_{n}}  \sum_{i = 1}^{\lfloor n \, \cdot \, \rfloor} (X_{i} - c_{n}) \dto  \frac{V(\,\cdot\,)}{\sqrt{W(1)}}$$
  with the $M_{1}$ topology.
\end{proof}

\section*{Acknowledgment}
This work was supported by University of Rijeka research grant uniri-iskusni-prirod-23-98.


\end{document}